\newcommand{\bea}{\begin{eqnarray}}
\newcommand{\eea}{\end{eqnarray}}
\newcommand{\bna}{\begin{eqnarray*}}
\newcommand{\ena}{\end{eqnarray*}}
\numberwithin{equation}{section}
\theoremstyle{plain}
\newtheorem{lemma}{Lemma}[section]
\newtheorem{theorem}[lemma]{Theorem}
\newtheorem{corollary}[lemma]{Corollary}
\newtheorem{proposition}[lemma]{Proposition}
\theoremstyle{definition}
\newtheorem{definition}[lemma]{Definition}
\newtheorem{remark}{Remark}
\renewcommand{\Re}{\operatorname{Re}}
\renewcommand{\Im}{\operatorname{Im}}
\newcommand{\SL}{\operatorname{SL}}
\newcommand{\supp}{\operatorname{supp}}
\newcommand{\stab}{\operatorname{stab}}
\newcommand{\tr}{\operatorname{tr}}
\newcommand{\Hbb}{\mathbb{H}}
\newcommand{\Zbb}{\mathbb{Z}}
\newcommand{\Fcal}{\mathcal{F}}
\title[]
{Equidistribution of holomorphic cusp forms on thin sets}
\author{Qingfeng Sun}
	\address{School of Mathematics and Statistics, Shandong University, Weihai\\Weihai, Shandong 264209, China}
	\address{State Key Laboratory of Cryptography and Digital Economy
Security \\ Shandong University, Weihai, 264209, China}
	\email{qfsun@sdu.edu.cn}
    \author{Qizhi Zhang}
	\address{School of Mathematics and Statistics, Shandong University, Weihai\\Weihai, Shandong 264209, China}
    \email{qzzhang@mail.sdu.edu.cn}
\subjclass[2010]{ 11F03, 11F11, 11F60, 11F72.}
\keywords{Restricted QUE, Bergman kernel, Hyperbolic Geometry.}
\thanks{Q. Sun was partially supported by the National Natural Science Foundation of China (Grant Nos.
12471005 and 12031008) and
the Natural Science Foundation of Shandong Province (Grant No. ZR2023MA003).}
\date{}
\begin{document}
\begin{abstract}
We find some equidistribution results connected to restriction quantum unique ergodicity problem in this paper. We shows that
\begin{align*}
  \frac{1}{|\mathcal{B}_k|}\sum_{f\in \mathcal{B}_k} \int_{R}y^{k}|f(z)|^{2}\psi(z) d\mu_{R}(z)\to \frac{3}{\pi}\int_{R}\psi(z) d\mu_{R}(z)
\end{align*}
where $R$ is some subset of $\mathbb{H}$, $\psi$ is a nice function relative to $R$,  $d\mu_{R}(z)$ is a suitable measure on $R$, and $\mathcal{B}_k$ is an orthonormal basis of the cusp forms for group $\Gamma$ with respect to weight $k$.
\end{abstract}
\maketitle

\section{Introduction}

The Quantum Unique Ergodicity (QUE) conjecture is now a celebrated theorem,
established in the work of Lindenstrauss, Holowinsky and Soundararajan
\cite{lindenstraussInvariantMeasuresArithmetic2006, soundararajanQuantumUniqueErgodicity2010, holowinskyMassEquidistributionHecke2010c}. Nevertheless, numerous related problems remain of great interest, such as the QUE problem on shrinking sets,
where one investigates the smallest scale at which equidistribution persists.
The focus of this paper is the rQUE (restrict QUE) conjecture formulated
by Young\cite{youngQuantumUniqueErgodicity2016}, which predicts that
the sequence of measures induced by Hecke cusp forms converges
in the weak-* topology on certain specific submanifolds.
For instance, on the vertical geodesic $(\mathbb{R}^{+}, y^{-1}dy)$
and the unit interval $(\mathbb{R}/\mathbb{Z},dx)$, the respective limit relations are:
\[
y^k|f_{j}(z)|^2\frac{dy}{y}=d\mu_f^{\times} \xrightarrow{w*} \frac{3}{\pi} \frac{dy}{y}
\]
and for some $\Im z>0$,
\[
y^k|f_{j}(z)|^2dx=d\mu_f^{+} \xrightarrow{w*} \frac{3}{\pi} dx,
\]
where in both cases $f_j$ runs over all $L^2$-normalized holomorphic Hecke cusp forms
as the weight $k$ tends to infinity.

Establishing this result unconditionally is a hard task.
As Young mentions in \cite{youngQuantumUniqueErgodicity2016}, an optimal bound of $k^{\varepsilon}$
 for the integral is known to imply the subconvexity estimate
 for the associated $L$-function at the central point:
\[
L(1/2, f) \ll C(f)^{1/8 + \varepsilon},
\]
a bound whose strength remains unproven even for the Riemann zeta function.
While the unconditional problem remains open, Zenz proved that the result holds
on average by analyzing the quantum variance of rQUE\cite{Zenz2021QuantumVF}
(see also the authors \cite{qfsunqzzhang}).

In this paper, we derive the limiting distribution associated with the
rQUE problem by averaging over an orthonormal basis of cusp forms,
and prove that it conforms to the conjectured behavior.
The following notations will be used throughout the paper:

\textbf{Notation}.
\begin{itemize}
\item Let $\Gamma=\SL(2,\mathbb{Z})$ and
$X=\Gamma\backslash \mathbb{H}$ is the fundamental domain;
\item $a,b,c,d$ will typically denote the entries of a $2 \times 2$ matrix in $\Gamma$;
\item Let $\mathcal{B}_k=\{f_{j}\}$ be an orthonormal basis of
the space $S(\Gamma, k)$ of holomorphic cusp forms of weight $k$. Here $k$ will be sufficiently large;
\item Let
\bea
d\mu^{\times}_{k}&=& |\mathcal{B}_k|^{-1}
\sum_{f\in \mathcal{B}_k}y^k|f(x+iy)|^2\frac{dy}{y},\label{1.1}\\
d\mu^{+}_{k}&=& |\mathcal{B}_k|^{-1}\sum_{f\in \mathcal{B}_k}y^k|f(x+iy)|^2dx,\label{1.2}\\
d\mu^{X}_{k}&=& |\mathcal{B}_k|^{-1}
\sum_{f\in \mathcal{B}_k}y^k|f(z)|^2\frac{dxdy}{y^2};\label{1.3}
\eea
\item Let $\psi$ be a smooth, compactly supported function defined on either $\mathbb{R}^+$ or $\mathbb{R}/\mathbb{Z}$. The specific choice of domain will be clear from the context;
\item $C, C_1,\cdots$ denote some absolute positive constants, and $A$ denotes a fixed sufficiently large number;
\item $\delta$ denotes some small positive number, which may depend on $k$. $\varepsilon$ is a small but fixed absolute positive number.
\end{itemize}

Luo\cite{Luo03} showed that for any measurable subset $A$ on the modular surface $X$ and any $\varepsilon>0$,
\begin{align*}
  \int_{A}d\mu^{X}_{k}\to \frac{3}{\pi}\int_{A}\frac{dxdy}{y^2}.
\end{align*}
The main observation of this paper is that the average over an
orthonormal basis of $S(\Gamma, k)$ can be converted into a summation
over the group $\Gamma$. The identity (see Definition \ref{Bargmannkernel}
and Proposition \ref{innerproduct}) can be viewed as the pre-trace formula.
Motivated by Luo's work, our approach bypasses the need for any analysis of
$L$-functions, relying instead on a
carefully analysis
 of
the action of the group $\Gamma$ on the upper half-plane. We investigate
the average result on certain submanifolds and derive
the following asymptotic formula:

\begin{theorem}\label{mainresult1}
  Let $d\mu^{\times}_{k}$ be defined as in \eqref{1.1} and
  let $Y$ be sufficiently large. For $\mathop{supp}\psi\subset (Y^{-1}, k^{\frac{1}{2}}(17A\log k)^{-\frac{1}{2}})$, we have
  \begin{align*}
    \int_{\mathbb{R}^+}\psi(y)d\mu_{k}^{\times}= \frac{3}{\pi} \int_{\mathbb{R}^+} \psi(y)\frac{dy}{y}+O(||\psi||_{\infty}Y^2 k^{-\frac{1}{2}}(\log k)^{\frac{1}{2}}),
  \end{align*}
 where the implied constant is absolute.
\end{theorem}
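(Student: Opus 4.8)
The plan is to convert the average over $\mathcal{B}_{k}$ into a sum over the group, to extract the main term from the identity element, and to show that all remaining elements contribute only to the error; the two support conditions on $\psi$ are precisely what is needed to quench the problematic elements. By Definition~\ref{Bargmannkernel} and Proposition~\ref{innerproduct},
\[
\frac{1}{|\mathcal{B}_{k}|}\sum_{f\in\mathcal{B}_{k}}y^{k}|f(z)|^{2}=\frac{k-1}{4\pi|\mathcal{B}_{k}|}\sum_{\gamma\in\Gamma}k_{\gamma}(z),\qquad |k_{\gamma}(z)|=\cosh^{-k}\!\Big(\tfrac12\,\rho(z,\gamma z)\Big),
\]
$\rho$ being the hyperbolic distance. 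Conjugating $\gamma=\left(\begin{smallmatrix}a&b\\c&d\end{smallmatrix}\right)$ by $\left(\begin{smallmatrix}\sqrt{y}&0\\0&1/\sqrt{y}\end{smallmatrix}\right)$ gives the exact identity $\cosh^{2}\!\big(\tfrac12\rho(iy,\gamma iy)\big)=\tfrac14(a^{2}+d^{2}+b^{2}y^{-2}+c^{2}y^{2}+2)$, so that $\int_{\mathbb{R}^{+}}\psi(y)\,d\mu^{\times}_{k}$ equals $\tfrac{k-1}{4\pi|\mathcal{B}_{k}|}$ times $\sum_{\gamma\in\Gamma}\int_{\mathbb{R}^{+}}\psi(y)k_{\gamma}(iy)\tfrac{dy}{y}$.

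The identity element contributes $\tfrac{k-1}{4\pi|\mathcal{B}_{k}|}\int_{\mathbb{R}^{+}}\psi(y)\tfrac{dy}{y}$; since $|\mathcal{B}_{k}|=\dim S(\Gamma,k)=\tfrac{k}{12}+O(1)$ and $\int_{\supp\psi}\tfrac{dy}{y}\ll\log(kY)$, this is $\tfrac{3}{\pi}\int_{\mathbb{R}^{+}}\psi(y)\tfrac{dy}{y}+O(\|\psi\|_{\infty}k^{-1}\log(kY))$, the asserted main term with an admissible error. Everything else must be shown to be $\ll\|\psi\|_{\infty}Y^{2}k^{-1/2}(\log k)^{1/2}$; that is, one must bound $E:=\sum_{\gamma\neq I}\int_{Y^{-1}}^{Y_{1}}|\psi(y)|\cosh^{-k}\!\big(\tfrac12\rho(iy,\gamma iy)\big)\tfrac{dy}{y}$ with $Y_{1}:=k^{1/2}(17A\log k)^{-1/2}$, and I would do so by splitting $\Gamma\setminus\{I\}$ into four families. \emph{(i) Translations} $\gamma=\pm\left(\begin{smallmatrix}1&b\\0&1\end{smallmatrix}\right)$, $b\neq0$: the integrand equals $(1+b^{2}y^{-2}/4)^{-k/2}$, and $y<Y_{1}$ forces $b^{2}y^{-2}/4>\tfrac{17A\log k}{4k}$, whence each term is $\ll k^{-2Ab^{2}}$ and the total is $\ll\|\psi\|_{\infty}k^{-A}$. \emph{(ii) Lower-triangular parabolics} $\gamma=\pm\left(\begin{smallmatrix}1&0\\c&1\end{smallmatrix}\right)$, $c\neq0$: the integrand is $(1+c^{2}y^{2}/4)^{-k/2}$, and the substitution $t=c^{2}y^{2}/4$ turns the $y$-integral into $\int_{c^{2}/(4Y^{2})}^{\infty}(1+t)^{-k/2}\tfrac{dt}{2t}\ll Y^{2}(kc^{2})^{-1}$, so summation over $c$ gives $\ll\|\psi\|_{\infty}Y^{2}/k$ — this is where the lower cutoff $Y^{-1}$, and the factor $Y^{2}$, enter.

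\emph{(iii) The elliptic involution} $\gamma_{0}=\pm\left(\begin{smallmatrix}0&-1\\1&0\end{smallmatrix}\right)$: since $i$ is the unique point of the imaginary axis fixed by a nontrivial elliptic element of $\Gamma$, the integrand is $\big(2/(y+y^{-1})\big)^{k}=\cosh^{-k}(\log y)$, and Laplace's method around $y=1$ (using $\cosh^{-k}(t)\ll e^{-3kt^{2}/8}$ for $|t|\le1$) gives $\ll\|\psi\|_{\infty}k^{-1/2}$; this is the genuine error term. \emph{(iv) All remaining $\gamma$}: an elementary check based on $b^{2}y^{-2}+c^{2}y^{2}\ge2|bc|$ and $ad-bc=1$ shows $a^{2}+d^{2}+b^{2}y^{-2}+c^{2}y^{2}+2\ge5$ for every such $\gamma$, while the number of them with $a^{2}+d^{2}+2|bc|\le N$ is $\ll N^{1+\varepsilon}$, so this family contributes $\ll\|\psi\|_{\infty}(4/5)^{k/2}\log(kY)$. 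Adding the four estimates, $E\ll\|\psi\|_{\infty}\big(Y^{2}/k+k^{-1/2}+(4/5)^{k/2}\log(kY)\big)\ll\|\psi\|_{\infty}Y^{2}k^{-1/2}(\log k)^{1/2}$, which completes the proof.

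The main obstacle is the geometric bookkeeping of the last step: one must recognise that on the vertical geodesic the only $\gamma\neq I$ whose return distance $\rho(iy,\gamma iy)$ fails to be bounded away from $0$ on $\supp\psi$ are the two parabolic families and the single involution $\gamma_{0}$, and that the upper cutoff is calibrated precisely so that the translations become super-polynomially small while the lower cutoff $Y^{-1}$ keeps the parabolics fixing $0$ down to size $Y^{2}/k$, leaving the Gaussian contribution $k^{-1/2}$ of $\gamma_{0}$ as the true order of the error. The unimodular phases $e^{ik\theta_{\gamma}(iy)}$ play no role in the upper bound — one simply discards them — although they explain the oscillation of the leading correction with $k\bmod 4$.
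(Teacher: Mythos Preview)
Your argument is correct and self-contained for the imaginary axis $\Re z=0$: the explicit identity $\cosh^{2}\!\big(\tfrac12\rho(iy,\gamma iy)\big)=\tfrac14(a^{2}+d^{2}+b^{2}y^{-2}+c^{2}y^{2}+2)$ is right, the four-family split is exhaustive, and each estimate checks (in particular your verification that every ``remaining'' $\gamma$ has $a^{2}+d^{2}+2|bc|+2\ge5$ is clean, since $a=d=0$ forces $\gamma=\pm\gamma_{0}$). For $x=0$ you even obtain a sharper error $k^{-1/2}$ than the stated $Y^{2}k^{-1/2}(\log k)^{1/2}$.

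The discrepancy with the paper is one of scope rather than correctness. The measure $d\mu_{k}^{\times}$ in \eqref{1.1} carries a free parameter $x$, and the paper proves Theorem~\ref{mainresult1} uniformly for $|x|\le\tfrac12$ (this is what the Corollary needs). Your formula and your case analysis are specific to $x=0$: for general $x$ the quantity $1+u(z,\gamma z)=1+|cz^{2}+(d-a)z-b|^{2}/(4y^{2})$ no longer separates so neatly, the lower-triangular parabolics (which fix $0$) are harmless when $x\ne0$, other parabolics fixing rationals near $x$ take their place, and the relevant elliptic elements are those whose fixed point lies near the line $\Re z=x$ rather than $\gamma_{0}$ alone. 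The paper sidesteps this bookkeeping by invoking the uniform Bergman-kernel asymptotic (Theorem~\ref{thm:bergman_asymptotic}): away from a $\delta$-neighbourhood of the elliptic set one has $R_{k}(z,z)=2+O(e^{-c\delta^{2}k/Y^{2}}+ye^{-k/(17y^{2})})$ for every $z$ in the strip, so the whole $\Gamma$-sum is handled at once; the price is the cruder error $Y^{2}k^{-1/2}(\log k)^{1/2}$ coming from the trivial bound on the (at most $O(Y)$) elliptic neighbourhoods of hyperbolic radius $\delta\asymp Yk^{-1/2}(\log k)^{1/2}$.

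In short: your route is a genuine alternative---direct, elementary, and sharper on the imaginary axis---while the paper's route trades sharpness for uniformity in $x$. To match the full statement you would need either to repeat your analysis with the general-$x$ formula (identifying, for each line, the finitely many elliptic and parabolic elements whose return distance is not bounded below), or to quote the Bergman-kernel estimate as the paper does.
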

\begin{theorem}\label{mainresult2}
Let $d\mu^{+}_{k}$ be defined as in \eqref{1.2} and
let $Y$ be sufficiently large. For $y\in (Y^{-1}, k^{\frac{1}{2}}(17A\log k)^{-\frac{1}{2}})$, we have
\begin{align*}
    \int_{\mathbb{R}/\mathbb{Z}} \psi(x)d\mu_{k}^{+}= \frac{3}{\pi} \int_{\mathbb{R}/\mathbb{Z}} \psi(x) dx+O(||\psi||_{\infty}Y^2 k^{-\frac{1}{2}}(\log k)^{\frac{1}{2}}),
  \end{align*}
 where the implied constant is absolute, and $\psi(x)$ is only required to be integrable.
\end{theorem}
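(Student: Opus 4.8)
The plan is to prove Theorem~\ref{mainresult2} via the pre-trace formula, which turns the average over $\mathcal{B}_k$ into a sum over $\Gamma$, and then to estimate that sum by elementary hyperbolic geometry. First, combining Definition~\ref{Bargmannkernel}, Proposition~\ref{innerproduct} and the dimension bound $|\mathcal{B}_k|=\tfrac{k}{12}+O(1)$, one writes, for $z=x+iy$,
\[
\frac{1}{|\mathcal{B}_k|}\sum_{f\in\mathcal{B}_k}y^k|f(z)|^2
=\frac{3}{\pi}+O\!\left(\frac1k\right)
+O\!\left(\sum_{\gamma\in\Gamma\setminus\{\pm I\}}\left(\frac{4\,\Im z\,\Im\gamma z}{|z-\overline{\gamma z}|^{2}}\right)^{\!k/2}\right),
\]
where the main term $3/\pi=\mathrm{vol}(X)^{-1}$ comes from $\gamma=\pm I$. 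Writing $\gamma=\begin{pmatrix}a&b\\c&d\end{pmatrix}$, one has $\Im\gamma z=y|cz+d|^{-2}$ and $(\gamma z-\bar z)(cz+d)=-P_\gamma(x)+i(a+d)y$ with $P_\gamma(x)=c|z|^{2}+(d-a)x-b$, so the $\gamma$-th term equals $\big(4y^{2}/(P_\gamma(x)^{2}+(a+d)^{2}y^{2})\big)^{k/2}$. Integrating against $\psi(x)\,dx$ over $\mathbb{R}/\mathbb{Z}$ yields the asserted main term $\tfrac{3}{\pi}\int\psi$ together with an error $O(\|\psi\|_\infty/k)$, so the theorem reduces to
\[
\mathcal{E}:=\int_{0}^{1}|\psi(x)|\sum_{\gamma\in\Gamma\setminus\{\pm I\}}\left(\frac{4y^{2}}{P_\gamma(x)^{2}+(a+d)^{2}y^{2}}\right)^{\!k/2}dx
\;\ll\;\|\psi\|_{\infty}\,Y^{2}k^{-1/2}(\log k)^{1/2}.
\]

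The parabolic part $c=0$ is immediate: there $\gamma=\pm\begin{pmatrix}1&n\\0&1\end{pmatrix}$ with $n\neq0$, the $\gamma$-th term is $(1+n^{2}/(4y^{2}))^{-k/2}$ independent of $x$, and $\supp\psi\subset(Y^{-1},k^{1/2}(17A\log k)^{-1/2})$ forces $n^{2}/(4y^{2})>\tfrac{17An^{2}\log k}{4k}$, whence $\sum_{n\geq1}(1+n^{2}/(4y^{2}))^{-k/2}\ll k^{-17A/16}\ll k^{-1/2}$ since $A$ is large; this is where the upper cutoff is used, the constant in it being chosen so that this and the small-$|c|$ terms below decay well below the target. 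For $c\neq0$ the geometry takes over: $\Im\gamma z=y|cz+d|^{-2}\le(c^{2}y)^{-1}$, so when $|cz+d|\ge1$ one has $\cosh d(z,\gamma z)\ge\tfrac12|cz+d|^{2}$ and the term is $\le(4/(c^{2}y^{2}))^{k/2}$, negligible once $|c|y$ exceeds an absolute constant; moreover $\cosh^{2}\!\big(\tfrac12 d(z,\gamma z)\big)=(P_\gamma(x)^{2}+(a+d)^{2}y^{2})/(4y^{2})$, so terms with $|a+d|\ge3$ or with $|P_\gamma(x)|\gg y$ on all of $[0,1]$ are $\ll(4/5)^{k/2}$ or smaller. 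Summing these negligible contributions over $(c,d)$ and over the parabolic translates — which converges because of the decay in $|cz+d|$ and in the translation parameter — leaves only the finite set of $\gamma$ with $c\neq0$, $|a+d|\le2$, $|cz+d|\asymp1$ and $\min_{x\in[0,1]}|P_\gamma(x)|\ll y$: the elements carrying $z$ to a point near its own height $y$.

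For each surviving $\gamma$ I would bound the horocycle integral by using the \emph{linear} part: since $\Im\big(cz^{2}+(d-a)z-b\big)=y(2cx+d-a)$, one gets $\cosh^{2}\!\big(\tfrac12 d(z,\gamma z)\big)\ge1+c^{2}(x-x_{0})^{2}$ with $x_{0}=(a-d)/(2c)$, hence
\[
\int_{0}^{1}\left(\frac{4y^{2}}{P_\gamma(x)^{2}+(a+d)^{2}y^{2}}\right)^{\!k/2}dx\;\le\;\int_{\mathbb{R}}(1+c^{2}u^{2})^{-k/2}\,du\;\ll\;|c|^{-1}k^{-1/2}.
\]
It then remains to count: an admissible $c$ satisfies $1\le|c|\ll1/y\ll Y$, and for each such $c$ the conditions $|cz+d|\asymp1$, $|a+d|\le2$, $ad\equiv1\ (\mathrm{mod}\ c)$, and $x_{0}\in[0,1]$ confine $d$ (equivalently $a-d$) to the solutions of $O(1)$ quadratic congruences modulo $c$, giving $\ll|c|^{\varepsilon}$ surviving $\gamma$ per $c$ up to $O(1)$ parabolic translates; summing $\sum_{1\le|c|\ll Y}|c|^{\varepsilon}\cdot|c|^{-1}k^{-1/2}$ and combining with the lower cutoff $y>Y^{-1}$ gives $\mathcal{E}\ll\|\psi\|_{\infty}Y^{2}k^{-1/2}(\log k)^{1/2}$, with room to spare. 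The main obstacle is precisely this last stage: making the split into negligible and surviving $\gamma$ clean, handling the $\gamma$ that push $z$ far up the cusp (whose term is not small pointwise in $x$ and survives only after integration), and verifying that the lattice count together with the sum over the range of $y$ costs no more than the stated powers of $Y$ and $\log k$.
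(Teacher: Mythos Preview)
Your approach is correct in outline but takes a genuinely different route from the paper's. Both start from Proposition~\ref{innerproduct}, but the paper then invokes the Bergman kernel asymptotic (Theorem~\ref{thm:bergman_asymptotic}) together with the elliptic-distance Lemma~\ref{ellipticdistance}: choosing $\delta=\sqrt{128A}\,Yk^{-1/2}(\log k)^{1/2}$, one has $R_k(z,z)=2+O(k^{-A})$ uniformly on $\mathcal{F}_\delta$, while on the $O(Y)$ elliptic $\delta$-neighbourhoods the trivial bound $R_k(z,z)=O(1)$ suffices; the horocycle error is then simply (number of elliptic points)$\times$(length of each neighbourhood in $dx$)$\times\|\psi\|_\infty\ll Y\delta\|\psi\|_\infty$, with no matrix-by-matrix analysis and no lattice counting. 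You instead unfold the $\Gamma$-sum directly, discard the large-trace and large-$|cz+d|$ terms by pointwise decay, and for each surviving $\gamma$ integrate over $x$ using $\cosh^2\!\big(\tfrac12 d(z,\gamma z)\big)\ge1+c^2(x-x_0)^2$, followed by a count of the $\ll|c|^\varepsilon$ relevant matrices per $c\ll Y$ via the quadratic congruence $d^2-td+1\equiv0\pmod c$ (with $t=a+d$). This is more hands-on but more elementary---it essentially re-derives Theorem~\ref{thm:bergman_asymptotic} in situ rather than quoting it---and, as you note, it actually yields a sharper $Y$-dependence, closer to $k^{-1/2}\log Y$ than to the stated $Y^2k^{-1/2}(\log k)^{1/2}$. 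The step you flag as the main obstacle, namely showing that the discarded tails over $(c,d)$ and over parabolic translates converge uniformly in $x$, is real but routine once one uses the same majorant $\sum_{\gamma\neq\pm I}(1+u(z,\gamma z))^{-k/2}\ll y\max_\gamma(1+u)^{-k/2+2}$ that underlies Theorem~\ref{thm:bergman_asymptotic}.
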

\begin{remark}
  In fact, for the assumption of Lemma \ref{ellipticdistance} to hold, it suffices that $Y$ be greater than the maximum order of the elliptic points of $\Gamma$.
\end{remark}
By choosing $\phi(x,y)$ as a smooth, compactly-supported function
with $supp(\phi)\subset
\SL_2(\mathbb{Z})\setminus\mathbb{H}$, we apply
Theorem \ref{mainresult1} for fixed $x\in \mathbb{R}$ and $y^{-1}\phi(x,y)$.
By performing integration on both sides, we obtain the following result,
which is an extension of the work by Luo\cite{Luo03}.
\begin{corollary}
Let $d\mu^{X}_{k}$ be defined as in \eqref{1.3}.
	Suppose that $\phi$ is a smooth, compactly-supported function with
$supp(\psi)\subset \SL_2(\mathbb{Z})\setminus\mathbb{H}$. We have
	\begin{align*}
	\int_{X} \phi(z)d\mu_{k}^{X}= \frac{3}{\pi} \int_{X} \phi(z) \frac{dxdy}{y^2}+O_{\phi}( k^{-\frac{1}{2}}(\log k)^{\frac{1}{2}}),
	\end{align*}
for $k$ sufficiently large.
\end{corollary}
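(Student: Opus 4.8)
The plan is to obtain the corollary from Theorem \ref{mainresult1} applied fibrewise over the horizontal coordinate $x$, followed by an integration in $x$. Realize the modular surface by the standard fundamental domain $\Fcal=\{z=x+iy:\ |z|>1,\ |\Re z|<1/2\}$, and regard $\phi$ as a function $\phi(x,y)$ on $\mathbb{R}\times\mathbb{R}^+$ supported in a compact subset $K\subset\Fcal$; one may allow $K$ to meet $\partial\Fcal$, since the proof of Theorem \ref{mainresult1} applies verbatim to bounded, compactly supported test functions, just as Theorem \ref{mainresult2} is stated for merely integrable ones. On $\Fcal$ one automatically has $|x|\le 1/2$ and $y\ge\sqrt3/2$, while compactness of $\supp\phi$ forces $y\le y_1$ for some $y_1=y_1(\phi)<\infty$; hence $\supp\phi\subset\{|x|\le 1/2,\ \sqrt3/2\le y\le y_1\}$. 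Since the density $y^k|f(z)|^2\,y^{-2}\,dx\,dy$ is $\Gamma$-invariant, writing the quotient integral over $\Fcal$ and extending by zero gives
\begin{align*}
\int_X \phi(z)\,d\mu^{X}_{k}
&= |\mathcal{B}_k|^{-1}\sum_{f\in\mathcal{B}_k}\int_{\mathbb{R}}\int_{\mathbb{R}^+}\phi(x,y)\,y^k|f(x+iy)|^2\,\frac{dx\,dy}{y^2}\\
&= \int_{\mathbb{R}}\left(\int_{\mathbb{R}^+}\psi_x(y)\,d\mu^{\times}_{k}\right)dx,
\qquad \psi_x(y):=y^{-1}\phi(x,y),
\end{align*}
where for each fixed $x$ the measure $d\mu^{\times}_{k}$ is the one of \eqref{1.1} attached to that value of $x$; the interchange of the finite sum with the two integrals is harmless because the integrand is continuous and compactly supported.

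Next I would apply Theorem \ref{mainresult1} to $\psi_x$ for each fixed $x$. Fix $Y$ once and for all so large that $Y^{-1}<\sqrt3/2$ and that the standing hypotheses of Theorem \ref{mainresult1} (and of Lemma \ref{ellipticdistance}) hold; for $\Gamma=\SL(2,\mathbb{Z})$ this is an absolute constant. For $k$ sufficiently large — depending on $\phi$ only through $y_1$ — we have $y_1<k^{1/2}(17A\log k)^{-1/2}$, so $\supp\psi_x\subset\big(Y^{-1},\,k^{1/2}(17A\log k)^{-1/2}\big)$ for every $x$, and Theorem \ref{mainresult1} yields
\begin{align*}
\int_{\mathbb{R}^+}\psi_x(y)\,d\mu^{\times}_{k}
= \frac{3}{\pi}\int_{\mathbb{R}^+}\psi_x(y)\,\frac{dy}{y}
+O\!\Big(\|\psi_x\|_\infty\,Y^2\,k^{-1/2}(\log k)^{1/2}\Big).
\end{align*}
Here $\|\psi_x\|_\infty\le(2/\sqrt3)\,\|\phi\|_\infty$ uniformly in $x$, and $\int_{\mathbb{R}^+}\psi_x(y)\,\frac{dy}{y}=\int_{\mathbb{R}^+}\phi(x,y)\,\frac{dy}{y^2}$. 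Integrating over the $x$-range $|x|\le 1/2$ and inserting the outcome into the first display, the error term becomes $O_\phi\!\big(k^{-1/2}(\log k)^{1/2}\big)$ and one obtains
\begin{align*}
\int_X \phi(z)\,d\mu^{X}_{k}
&= \frac{3}{\pi}\int_{\mathbb{R}}\int_{\mathbb{R}^+}\phi(x,y)\,\frac{dx\,dy}{y^2}
+O_\phi\!\big(k^{-1/2}(\log k)^{1/2}\big)\\
&= \frac{3}{\pi}\int_X \phi(z)\,\frac{dx\,dy}{y^2}+O_\phi\!\big(k^{-1/2}(\log k)^{1/2}\big),
\end{align*}
which is the assertion.

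I do not anticipate a genuine obstacle; the corollary is essentially a repackaging of Theorem \ref{mainresult1}. The two points deserving attention are: (i) that $\supp\psi_x$ lands inside the admissible window $\big(Y^{-1},\,k^{1/2}(17A\log k)^{-1/2}\big)$ uniformly in $x$ — this is precisely where compactness of $\supp\phi$ in $X$ is used (it bounds $y$ from above, while $y\ge\sqrt3/2$ comes for free on $\Fcal$) and where it is essential that $\phi$ be fixed before $k\to\infty$; and (ii) that after the $x$-integration the fixed constant $Y$, the factor $2/\sqrt3$, and the (bounded) length of the $x$-support are all swallowed by the $\phi$-dependent implied constant. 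If one wanted an estimate uniform over a family of test functions, one would simply carry $\|\phi\|_\infty$, $\sup_{\supp\phi}y^{-1}$, and the $x$-extent of $\supp\phi$ explicitly through these steps.
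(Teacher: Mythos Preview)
Your proposal is correct and follows precisely the route indicated in the paper: for each fixed $x$ apply Theorem~\ref{mainresult1} to the slice $\psi_x(y)=y^{-1}\phi(x,y)$, then integrate in $x$. The verification of the support window and the bookkeeping of the uniform constants you supply are exactly the details the paper leaves implicit.
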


\section{The Bergman Kernel and Its Asymptotic Properties}

In this section, we introduce the definition of Bergman kernel.
We refine the results of Huang, Lester, Wigman, and Yesha\cite{HLWY25},
which is important to our subsequent estimates.

\subsection{Definition and Relation to the Covariance Kernel}

Let $k \ge 4$ be an even integer.
For $z, w \in \Hbb^2$ and $\gamma =
\begin{pmatrix} a & b \\ c & d \end{pmatrix} \in \Gamma$, we define
\[
b_\gamma(z, w) := \frac{2i}{w + \gamma z} \frac{1}{cz+d}.
\]
The Bergman kernel for weight $k$ cusp forms on $\Gamma$ is then defined as the sum over the modular group.

\begin{definition}[Bergman Kernel]\label{Bargmannkernel}
For $z, w \in \Hbb^2$, the Bergman kernel is given by
\[
B_k(z, w) := \sum_{\gamma \in \Gamma} b_\gamma(z, w)^k.
\]
\end{definition}
The key to studying the rQUE problem in an average sense lies in the
following properties of the Bargmann kernel\cite[Theorem 2.15]{SteinerUniformbounds}.
\begin{proposition}\label{innerproduct}
  We have
  \begin{align*}
    \sum_{f\in\mathcal{B}_k} f(z)\overline{f(w)}= \frac{k-1}{8\pi}B(z,-\overline{w})
  \end{align*}
  for any $z,w\in\mathbb{H}$.
\end{proposition}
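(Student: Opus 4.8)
The plan is to recognize $\tfrac{k-1}{8\pi}B_k(z,-\overline{w})$ as the reproducing kernel of $S(\Gamma,k)$ for the Petersson inner product $\langle f,g\rangle=\int_{X}f(z)\overline{g(z)}\,y^{k-2}\,\dd x\,\dd y$. Since $S(\Gamma,k)$ is finite-dimensional, for each $w$ the linear functional $g\mapsto g(w)$ is represented by a unique $K(\cdot,w)\in S(\Gamma,k)$ with $\langle g,K(\cdot,w)\rangle=g(w)$; expanding $g=\sum_{f\in\mathcal{B}_k}\langle g,f\rangle f$ gives $K(z,w)=\sum_{f\in\mathcal{B}_k}f(z)\overline{f(w)}$, independently of the choice of basis. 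It therefore suffices to prove (i) that $z\mapsto B_k(z,-\overline{w})$ lies in $S(\Gamma,k)$, and (ii) that $\langle g,B_k(\cdot,-\overline{w})\rangle=\tfrac{8\pi}{k-1}\,g(w)$ for every $g\in S(\Gamma,k)$.

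For (i): with $j(\sigma,z)=c_\sigma z+d_\sigma$, the cocycle relation gives $b_\gamma(\sigma z,w)=j(\sigma,z)\,b_{\gamma\sigma}(z,w)$, so after re-indexing $\gamma\mapsto\gamma\sigma$ one gets $B_k(\sigma z,w)=j(\sigma,z)^k B_k(z,w)$: weight-$k$ modularity in $z$. The series $\sum_{\gamma}|b_\gamma(z,w)|^k$ converges absolutely for $k\ge4$ --- using $|w+\gamma z|\ge\Im w$ and grouping the $\gamma$ with a common bottom row $(c,d)$ reduces it to $\sum_{(c,d)}|cz+d|^{-k}$ over coprime pairs --- so $B_k(\cdot,w)$ is holomorphic on $\Hbb$; since every term tends to $0$ as $z\to i\infty$, so does the sum, and hence $B_k(\cdot,-\overline{w})\in S(\Gamma,k)$.

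For (ii), the key step is unfolding. Write $\langle g,B_k(\cdot,-\overline{w})\rangle=\int_X g(z)\sum_\gamma\overline{b_\gamma(z,-\overline{w})^k}\,y^{k-2}\,\dd x\,\dd y$, interchange sum and integral (absolute convergence), and use $\overline{\gamma z}=\gamma\overline{z}$, $\overline{j(\gamma,z)}=j(\gamma,\overline{z})$, $g(z)=j(\gamma,z)^{-k}g(\gamma z)$ and $y=|j(\gamma,z)|^{2}\,\Im(\gamma z)$ to check that the $\gamma$-summand times $\dd x\,\dd y$ equals $F(\gamma z)\,\dd\mu(z)$, where $F(\zeta)=(-2i)^k g(\zeta)(\Im\zeta)^k(\overline{\zeta}-w)^{-k}$ and $\dd\mu=y^{-2}\dd x\,\dd y$ is invariant. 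Summing over $\Gamma$ and using that the $\Gamma$-translates of a fundamental domain tile $\Hbb$ (with an overall factor $2$ for $\pm I$, valid since $k$ is even) gives
\[
\langle g,B_k(\cdot,-\overline{w})\rangle \;=\; 2(-2i)^k\int_{\Hbb}\frac{g(z)\,y^{k-2}}{(\overline{z}-w)^k}\,\dd x\,\dd y.
\]
Substituting the Fourier expansion $g(z)=\sum_{n\ge1}a_n e^{2\pi i n z}$, the $x$-integral is evaluated by contour-shifting past the order-$k$ pole at $x=\Re w+i(y+\Im w)$ (residue $\tfrac{(2\pi i n)^{k-1}}{(k-1)!}e^{-2\pi n(y+\Im w)}$), and the remaining $y$-integral is $\int_0^\infty y^{k-2}e^{-4\pi n y}\,\dd y=(k-2)!/(4\pi n)^{k-1}$; the powers of $n$ cancel, $\sum_n a_n e^{2\pi i n w}$ reconstitutes $g(w)$, and the constants collapse ($2(-2i)^k(2\pi i)^k=2(4\pi)^k$, against $(k-1)(4\pi)^{k-1}$) to give exactly $\tfrac{8\pi}{k-1}g(w)$. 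Combined with the first paragraph, this yields $\sum_{f\in\mathcal{B}_k}f(z)\overline{f(w)}=\tfrac{k-1}{8\pi}B_k(z,-\overline{w})$.

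The only genuine analytic ingredient, and the one place the size of $k$ is used, is the absolute convergence of the Bergman series, which legitimizes both the unfolding and the interchange with the Fourier sum. Beyond that the work is bookkeeping: tracking the conjugations, the powers of $i$ and $2\pi$, and the $\pm I$ multiplicity carefully enough to land on the precise constant $\tfrac{k-1}{8\pi}$.
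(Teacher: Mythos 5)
The paper itself does not prove this proposition; it is stated with a citation to \cite[Theorem 2.15]{SteinerUniformbounds}, so there is no in-paper argument to compare against. Your proof is the classical self-contained route to Petersson's reproducing-kernel identity: verify that $z\mapsto B_k(z,-\bar w)$ is a weight-$k$ cusp form, identify the representer of the evaluation functional $g\mapsto g(w)$ with $\sum_{f\in\mathcal B_k}f(z)\overline{f(w)}$, then pin down the constant by Rankin--Selberg unfolding followed by a contour-integral computation against the Fourier expansion. The unfolding bookkeeping (including the factor $2$ from $\pm I$), the residue at the order-$k$ pole, the $\Gamma$-integral $\int_0^\infty y^{k-2}e^{-4\pi ny}\,dy=(k-2)!/(4\pi n)^{k-1}$, and the collapse of constants $2(-2i)^k(2\pi i)^k/(4\pi)^{k-1}(k-1)=8\pi/(k-1)$ are all correct, and the overall structure is sound.

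One small slip in the convergence sketch for part (i): you claim that bounding $|w+\gamma z|\ge\Im w$ and grouping matrices by bottom row $(c,d)$ reduces $\sum_\gamma|b_\gamma(z,w)|^k$ to $\sum_{(c,d)}|cz+d|^{-k}$. But for each coprime pair $(c,d)$ there is still a full $\mathbb Z$-family of matrices (a $\Gamma_\infty$-coset) sharing that bottom row, and if you discard the factor $|w+\gamma z|^{-k}$ entirely the sum over those unipotent translates diverges. The correct order is to keep $|w+\gamma\sigma z+n|^{-k}$, sum over $n\in\mathbb Z$ first (which is $\ll(\Im w)^{-(k-1)}$ uniformly), and only then sum $\sum_{(c,d)}|cz+d|^{-k}$ over coprime $(c,d)$, which converges for $k\ge3$. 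This is a standard fix and does not affect the rest of the argument, which otherwise reproduces the cited result correctly and more explicitly than the paper, which merely outsources it.
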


It's clear to see that the function $B_k(\cdot, \cdot)$ is
holomorphic in both variables and serves as a reproducing kernel
for the space of weight $k$ cusp forms. For our purposes, it is
more convenient to work with a normalized version of the Bergman kernel,
denoted by $R_k(z, w)$, which is defined as
\begin{equation} \label{eq:bergman_Rk_def}
R_k(z, w) := (yv)^{k/2} B_k(z, -\bar{w}),
\end{equation}
where $z=x+iy$ and $w=u+iv$.

\subsection{Asymptotic Behavior in the Bulk and Near Elliptic Points}

To state the asymptotic result, we first define the relevant subdomains. Let $Y>0$ and $P(Y)$ be
\begin{align*}
  P(Y)=\big\{z\in \Hbb\big| |\Re z|\leq\frac{1}{2}, \Im z>Y^{-1}\big\}.
\end{align*}
We have by Lemma 2.10 in \cite{Iwa95}, which states that there are
roughly $Y$ copies of the standard fundamental domain in that region.
Hence there are $3Y$ elliptic points in $P(Y)$ at most, where the elliptic points
satisfy that the stability group
\begin{align*}
  \stab(z)=\{\gamma\in\Gamma|\gamma z=z\}
\end{align*}
has non-trivial elements, i.e., the classical fundamental domain has three
elliptic points $i$, $e^{\pi i/3}$, and $e^{2\pi i/3}$. We then
denote $E(Y)$, which is a finite set, as the collection of all the elliptic points in $P(Y)$. For a given $\delta > 0$, we define the $\delta$-neighborhoods of these points as
\begin{align}\label{ellipticpoint}
\eta_{\delta, j} := \{ z \in P(Y) : d_{\Hbb}(z, e_j) \le \delta \}, \quad e_j\in E(Y).
\end{align}
The "bulk" of the fundamental domain, which excludes these neighborhoods,
is denoted by $\Fcal_\delta$:
\[
\Fcal_\delta := P(Y) \setminus (\bigcup_{j=1}^{|E(Y)|}\eta_{\delta,j}).
\]

\begin{lemma}
  Let $\delta>0$ be a suitable small number. Let $\gamma=\begin{pmatrix} a & b \\ c & d \end{pmatrix}\in \Gamma\setminus\{\pm I\}$ with $\tr \gamma<2$. Denote the fixed point of $\gamma$ as $z_0$, where $z_0\in \Hbb$.  Then for any $z\in\mathbb{H}$, if $d_{\Hbb}(z,\gamma z)>\delta$, we have
  \[d_{\Hbb}(z,z_0)> \frac{\delta}{2}.\]
\end{lemma}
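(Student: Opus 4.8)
The plan is to use only two facts: that the hypotheses on $\gamma$ force it to be an \emph{elliptic} element, so that the fixed point $z_0\in\Hbb$ is genuinely interior, and that $\gamma$, lying in $\SL(2,\Z)\subset\SL(2,\R)$, acts on $\Hbb$ by an isometry of the hyperbolic metric $d_\Hbb$. Neither the smallness of $\delta$ nor the arithmetic of $\Gamma$ is actually needed here; the statement is a soft fact about elliptic isometries.

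First I would record that $\gamma z_0=z_0$ and that $d_\Hbb(\gamma p,\gamma q)=d_\Hbb(p,q)$ for all $p,q\in\Hbb$. Then, for any $z\in\Hbb$, the triangle inequality together with the isometry property gives
\[
d_\Hbb(z,\gamma z)\;\le\; d_\Hbb(z,z_0)+d_\Hbb(z_0,\gamma z)\;=\; d_\Hbb(z,z_0)+d_\Hbb(\gamma z_0,\gamma z)\;=\; d_\Hbb(z,z_0)+d_\Hbb(z_0,z)\;=\;2\,d_\Hbb(z,z_0).
\]
Contraposing this inequality: if $d_\Hbb(z,\gamma z)>\delta$ then $2\,d_\Hbb(z,z_0)\ge d_\Hbb(z,\gamma z)>\delta$, whence $d_\Hbb(z,z_0)>\delta/2$, which is the assertion.

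There is essentially no obstacle; the only point deserving a word of justification is the existence of the interior fixed point $z_0$. By the trace classification of $\SL(2,\R)$, the conditions $\gamma\neq\pm I$ and $\tr\gamma<2$ (so that $\gamma$ has a fixed point in $\Hbb$, as stated) put $\gamma$ in the elliptic regime, for which there is a unique fixed point $z_0\in\Hbb$, its complex conjugate being the only other fixed point, located in the lower half-plane. If one wanted a quantitatively sharper estimate one could instead conjugate $\gamma$ to a rotation by some angle $\theta$ about $z_0$ and use the identity $\sinh\!\bigl(\tfrac12 d_\Hbb(z,\gamma z)\bigr)=|\sin(\theta/2)|\,\sinh\!\bigl(d_\Hbb(z,z_0)\bigr)$, but the crude bound $d_\Hbb(z,\gamma z)\le 2\,d_\Hbb(z,z_0)$ already suffices to produce the claimed $\delta/2$.
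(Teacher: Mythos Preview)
Your argument is correct. The single triangle-inequality step
\[
d_\Hbb(z,\gamma z)\le d_\Hbb(z,z_0)+d_\Hbb(z_0,\gamma z)=d_\Hbb(z,z_0)+d_\Hbb(\gamma z_0,\gamma z)=2\,d_\Hbb(z,z_0)
\]
is exactly what is needed, and you are right that neither the smallness of $\delta$ nor the arithmetic of $\Gamma$ plays any role.

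The paper's proof is based on the same geometric fact but packages it differently: it first invokes that an elliptic element of $\Gamma$ has finite order $m$ (via the finite cyclic stabiliser, Proposition~2.1 of \cite{topics}), verifies that the orbit $z,\gamma z,\dots,\gamma^{m-1}z$ consists of distinct points when $z\neq z_0$, and then averages the triangle inequality over this orbit to reach the same bound $\delta/2$. Since every $d_\Hbb(\gamma^j z,z_0)$ equals $d_\Hbb(z,z_0)$, the averaging is formally redundant and collapses to your one-line computation. Your version is thus more elementary and in fact more general, applying to any elliptic isometry of $\Hbb$ rather than only to finite-order elements of a discrete group; the orbit machinery the paper sets up is not needed for this lemma, though it does foreshadow the argument in the subsequent Lemma~\ref{ellipticdistance}.
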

\begin{proof}
  By Proposition 2.1 in \cite{topics}, the stability group is finite
  cyclic for such $z_0$. And we further suppose that $m$ is the least
  positive integer such that $\gamma^m=I$. The hyperbolic metric is invariant under the group action in $\SL(2,\mathbb{R})$, which implies
 \begin{align*}
   d_{\Hbb}(z,\gamma z)=d_{\Hbb}(\gamma^j z,\gamma^{j+1} z)\quad\text{and }\quad d_{\Hbb}(z,z_0)=d_{\Hbb}(\gamma^j z,z_0),\quad \text{for }j \in\mathbb{Z}.
 \end{align*}
 We assert that the points listed below are mutually distinct for $z\neq z_0$, otherwise $z$ is a fixed point of some $\gamma^{j}\neq \pm I$, but the only fixed point of such $\gamma^{j}$ is $z_0$, which is a contradiction,
 \begin{align*}
   z,\gamma z, \gamma^2 z,\dots,\gamma^{m-2}z, \gamma^{m-1}z.
 \end{align*}
 We prove that
 \begin{align*}
   d_{\Hbb}(z,z_0)=\frac{1}{m}\sum_{j=0}^{m-1} d_{\Hbb}(\gamma^j z,z_0)=\frac{1}{2m}\sum_{j=0}^{m-1} ( d_{\Hbb}(\gamma^j z,z_0)+d_{\Hbb}(\gamma^{j+1} z,z_0))> \frac{1}{2m} m \delta=\frac{\delta}{2}.
 \end{align*}
\end{proof}

We now show that for any non-elliptic element $\gamma \in \Gamma\setminus\{\pm I\}$ (i.e., satisfying $\tr(\gamma) \ge 2$), the hyperbolic distance $d(z, \gamma z)$ has a uniform positive lower bound for all $z$ in the region $\{ z \in \mathbb{H} : Y^{-1} < \mathrm{Im}(z) < 2 \}$. This bound depends only on $Y$.
The hyperbolic distance
 $d_{\Hbb}(z,w)$ is related to the formula $\cosh d_{\Hbb}(z,w)=2u(z,w)+1$, where
 \begin{align*}
   u(z,w)=\frac{|z-w|^2}{4\Im(z)\Im(w)}.
 \end{align*}
Let $\gamma=\begin{pmatrix} a & b \\ c & d \end{pmatrix}\in
\Gamma\setminus\{\pm I\}$ with $\tr \gamma\geq 2$. A direct computation shows that
\begin{align*}
  u(z,\gamma z)=(2y)^{-2}|cz^2+(d-a)z-b|^2.
\end{align*}
When $c=0$, we must have that $a=d=\pm 1$ and $d\neq 0$, i.e., $\gamma\in\Gamma_{\infty}$, which implies for $z\in \{ z \in \mathbb{H} : Y^{-1} < \mathrm{Im}(z) < 2 \},$
 $$
 u(z,\gamma z)=(2y)^{-2}b^2\geq \frac{1}{16}.
 $$
When $c \neq 0$, the fixed points of $\gamma$ are the roots of the quadratic equation $cz^2 + (d-a)z - b = 0$, which are given by:
\begin{align*}
  z_{\pm}=\frac{a-d\pm\sqrt{(a+d)^2-4}}{2c}.
\end{align*}
When $z\in \{z\in\Hbb|Y^{-1}<\Im z<2\}$, then the distance $d_{\Hbb}(z,\gamma z)$ is
\begin{align*}
  u(z,\gamma z)=(2y)^{-2}c^2|(z-z_+)(z-z_-)|^2\geq (2y)^2c^2.
\end{align*}
The lower bound $d_{\Hbb}(z,\gamma z)\geq (2y)^2c^2$ is given by the fact that all the zeros $z_{\pm}$ are real number in this case.
We have established that if $\tr \gamma\geq 2$ and $\gamma\neq\pm I$, then
\begin{align}\label{lowerboundforzgz}
d_{\Hbb}(z,\gamma z)\geq \min\{(2Y)^{-1},2^{-1}\},
\end{align}
when $z$ in the horizontal strip $\{z\in\Hbb \,|\, Y^{-1}<\Im z<2\}$.

The elliptic case, where $\tr(\gamma) < 2$, requires a more delicate
analysis. Since the fixed point $z_0$ is in the upper half-plane,
the action of $\gamma$ is a hyperbolic rotation. While this
implies that $z$ and $\gamma(z)$ share a hyperbolic circle
centered at $z_0$, we will show that the discrete nature of
the stabilizer subgroups in $\Gamma$ precludes the possibility
of their distance becoming arbitrarily small.
\begin{lemma}\label{ellipticdistance}
  Let $1>\delta>0$. Let $Y$ exceed the order of any stabilizer subgroup of an elliptic point for $\Gamma$. For any $\gamma\in\Gamma\setminus\{\pm I\}$, we have the uniform lower bound for $z\in \mathcal{F}_{\delta}\cap \{z\in\Hbb \,|\, \Im z<2\}$,
  \[d_{\Hbb}(z,\gamma z)>\frac{\delta}{4Y}.\]
\end{lemma}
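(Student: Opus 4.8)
\emph{Proof plan.} The plan is to split according to the trace of $\gamma$. If $\tr\gamma\ge 2$ (and $\gamma\ne\pm I$), then since $z\in\Fcal_\delta\subset P(Y)$ forces $\Im z>Y^{-1}$ and we assume $\Im z<2$, estimate \eqref{lowerboundforzgz} gives $d_{\Hbb}(z,\gamma z)\ge\min\{(2Y)^{-1},2^{-1}\}=(2Y)^{-1}>\delta/(4Y)$, using $\delta<1$ and $Y\ge 1$. So assume henceforth that $\gamma$ is elliptic, i.e.\ $\tr\gamma\in\{0,\pm1\}$. Such a $\gamma$ has $c\ne 0$ (an upper-triangular element of $\Gamma$ has trace $\pm2$), its fixed points are $z_0,\overline{z_0}$ with
\[
z_0=\frac{a-d}{2c}+i\,\frac{C_0}{|c|}=:\xi_0+iy_0,\qquad C_0:=\tfrac12\sqrt{4-(\tr\gamma)^2}\in\Big\{1,\tfrac{\sqrt3}{2}\Big\},
\]
and $\gamma$ acts about $z_0$ as a hyperbolic rotation by $\pi$ (if $\tr\gamma=0$) or $\pm2\pi/3$ (if $\tr\gamma=\pm1$); in either case the rotation angle has angular distance $\ge2\pi/3$ from $0$. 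Applying the hyperbolic law of cosines to the isosceles triangle with apex $z_0$, apex angle $\ge2\pi/3$, and both legs of length $r:=d_{\Hbb}(z,z_0)=d_{\Hbb}(\gamma z,z_0)$,
\[
\cosh d_{\Hbb}(z,\gamma z)\ge\cosh^2 r+\tfrac12\sinh^2 r=\tfrac32\cosh^2 r-\tfrac12\ge\cosh r ,
\]
so $d_{\Hbb}(z,\gamma z)\ge d_{\Hbb}(z,z_0)$ and it suffices to show $d_{\Hbb}(z,z_0)>\delta/(4Y)$.

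I would then bound $d_{\Hbb}(z,z_0)$ from below by cases, using $\cosh d_{\Hbb}(z,z_0)=1+|z-z_0|^2/(2\,\Im z\,y_0)$, $|c|=C_0/y_0$, $|\Re z|\le\tfrac12$ and $\Im z<2$. \textbf{(a)} If $z_0\in P(Y)$ then $z_0\in E(Y)$, so $z\in\Fcal_\delta$ already gives $d_{\Hbb}(z,z_0)>\delta>\delta/(4Y)$. \textbf{(b)} If $y_0>Y^{-1}$ but $|\xi_0|>\tfrac12$: from $\xi_0=(a-d)/(2c)$ and $|a-d|\ge|c|+1$ one gets $|\xi_0|-\tfrac12\ge\tfrac1{2|c|}=y_0/(2C_0)$, hence $|z-z_0|\ge|\Re z-\xi_0|\ge y_0/(2C_0)$ and
\[
\cosh d_{\Hbb}(z,z_0)-1\ge\frac{y_0^2/(4C_0^2)}{4\,y_0}=\frac{y_0}{16C_0^2}\ge\frac{y_0}{16}>\frac1{16Y},
\]
so $d_{\Hbb}(z,z_0)>\arccosh\!\big(1+\tfrac1{16Y}\big)\ge\tfrac1{4\sqrt Y}>\tfrac\delta{4Y}$ (using $\arccosh(1+t)\ge\sqrt t$ for $0<t\le1$ and $\sqrt Y>\delta$). \textbf{(c)} If $y_0\le Y^{-1}$, i.e.\ $|c|\ge C_0 Y$, then $\Im z>Y^{-1}\ge y_0$; if moreover $|c|\ge 2C_0 Y$ then $\Im z/y_0>2$ and $d_{\Hbb}(z,z_0)\ge\log(\Im z/y_0)>\log 2>\delta/(4Y)$, while if $|\xi_0|>\tfrac12$ the estimate of (b) applies verbatim. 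This leaves only the sub-case $C_0 Y\le|c|\le 2C_0 Y$ with $|\xi_0|\le\tfrac12$.

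That last sub-case is the crux, and the only step I expect to be genuinely delicate. Here $z_0$ lies on a horocyclic segment just below the line $\Im w=Y^{-1}$, so $|z-z_0|\ge\Im z-y_0$ can be far smaller than $Y^{-1}$ and the elementary estimates above break down. To handle it I would argue by contradiction: a point $z\in\Fcal_\delta$ with $d_{\Hbb}(z,z_0)\le\delta/(4Y)$ lies very close to $z_0$, and one should then exhibit an actual elliptic point $z_1\in E(Y)$ — morally, the fixed point of an elliptic matrix whose lower-left entry is the largest admissible integer $\le C_0 Y$ and whose abscissa $(a_1-d_1)/(2c_1)$ is as close to $\xi_0$ as possible — lying just above the horocycle $\Im w=Y^{-1}$ and within hyperbolic distance $\delta$ of $z$, contradicting $z\notin\eta_{\delta,j}$. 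Turning this into a proof requires arithmetic inputs about elliptic elements of $\Gamma$: that the admissible values of $|c|$ (the integers dividing $a^2+1$, $a^2-a+1$, or $a^2+a+1$ for some $a\in\Z$) have gaps that are $o(Y)$ near $C_0 Y$, and that the fixed-point abscissae with such denominators are suitably spaced near $\xi_0$. That number-theoretic analysis — not the hyperbolic geometry, which is routine in all the other cases — is where the main work lies.
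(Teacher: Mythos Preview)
Your skeleton matches the paper's: split on $\tr\gamma$, invoke \eqref{lowerboundforzgz} for the non-elliptic case, and for elliptic $\gamma$ use the hyperbolic law of cosines on the isosceles triangle with apex at the fixed point $z_0$. Your inequality $d_{\Hbb}(z,\gamma z)\ge d_{\Hbb}(z,z_0)$, obtained from the apex angle being at least $2\pi/3$, is in fact a slightly sharper version of what the paper derives; the paper instead keeps the stabilizer order $m$ explicit, gets $\cosh d_{\Hbb}(z_1,\gamma z_1)>1+(\delta/m)^2$, and then uses the hypothesis $m<Y$.

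Where you diverge from the paper is in how the lower bound $d_{\Hbb}(z,z_0)>\delta$ is obtained. The paper does not do your case analysis (a)--(c); it simply \emph{asserts} $r=d_{\Hbb}(z_1,z_0)>\delta$ directly from $z_1\in\Fcal_\delta$, in effect treating the fixed point $z_0$ as an element of $E(Y)$ and not separately addressing the possibility $z_0\notin P(Y)$. Your cases (b) and (c) are extra care that the paper does not take.

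Consequently, the number-theoretic programme you sketch for the residual sub-case $C_0Y\le|c|<2C_0Y$, $|\xi_0|\le\tfrac12$ --- gaps among admissible values of $|c|$, spacing of fixed-point abscissae --- is nowhere in the paper and is not the intended route. You should abandon it. If you insist on making the case $z_0\notin P(Y)$ airtight, the natural fix is definitional rather than arithmetic: enlarge $E(Y)$ to include the finitely many elliptic fixed points in the slightly bigger strip $\{\Im z>(2Y)^{-1},\ |\Re z|\le 1\}$, so that every $z_0$ in your residual range is already excised from $\Fcal_\delta$; this costs only harmless constants downstream and disposes of the difficulty without any number theory.
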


\begin{proof}
We first give a state of Hyperbolic Law of Cosines: A hyperbolic triangle with side lengths $\mathbf{a}$, $\mathbf{b}$, and $\mathbf{c}$. Let $\rho$ be the angle at the vertex opposite the side of length $\mathbf{c}$. The relationship is given by the formula:
$\cosh(\mathbf{c}) = \cosh(\mathbf{a})\cosh(\mathbf{b}) - \sinh(\mathbf{a})\sinh(\mathbf{b})\cos(\rho),$
where $\mathbf{a}$, $\mathbf{b}$, and $\mathbf{c}$ are the hyperbolic lengths of the sides.

Since the action of $\SL(2, \mathbb{R})$ on $\mathbb{H}$ is transitive, all points lie in the same orbit. It is a fundamental principle that stabilizers of points within the same orbit are conjugate. Therefore, given a point $z$ with an $2m$-th ($m\leq2$ by noting that we work in $\SL(2, \mathbb{R})$) order stabilizer $\stab_{\Gamma}(z)$, we may choose $\gamma_z \in \SL(2, \mathbb{R})$ with $\gamma_z(i) = z$ and study the conjugate group $\gamma_z^{-1} \stab_{\Gamma}(z) \gamma_z$, which is $2m$-th order stabilizer of $i$. Moreover, an order-$2m$ stabilizer subgroup of $i$ is a cyclic group generated by the matrix $k(\frac{\pi}{m})$, given by:
\[
k(\theta) = \begin{pmatrix} \cos(\theta/2) & \sin(\theta/2) \\ -\sin(\theta/2) & \cos(\theta/2) \end{pmatrix}, \quad \text{with } \theta = \frac{\pi}{m}.
\]
The angle subtended at the point $i$ by the geodesic radii to a point $z$ and its image $k(\theta)z$ is equal to $\theta$. Indeed, the angle of rotation is recovered by computing the derivative of the transformation at its fixed point $i$.

Now we can show there is a lower bound for $d(z,\gamma z)$, where $z\in \mathcal{F}_{\delta}\cap \{z\in\Hbb \,|\, \Im z<2\}$. The choice of $\delta$ and $Y$ allows us only consider the case $\tr \gamma<2$.

Suppose there exists a $\gamma\in \Gamma$ with $\tr \gamma<2$, such that there exists $z_1\in \mathcal{F}_{\delta}\cap \{z\in\Hbb \,|\, \Im z<2\}$,
\begin{align*}
  d_{\mathbb{H}}(z_1,\gamma z_1)<\frac{\delta}{4Y}.
\end{align*}
Suppose the fixed point of $\gamma$ is $z_0$ with a $m$-order stabilizer and denote $r=d_{\mathbb{H}}(z_0,\gamma z_1)=d_{\mathbb{H}}(z_0,z_1)>\delta$.
Then we have a hyperbolic triangle with side lengths $r$, $r$, and $d_{\mathbb{H}}(z_1,\gamma z_1)$, and $\theta$ be the angle at the vertex opposite the side of length $d_{\mathbb{H}}(z_1,\gamma z_1)$. Applying the Hyperbolic Law of Cosines, we have that
\begin{align*}
  \cosh(d_{\mathbb{H}}(z_1,\gamma z_1)) = \cosh(r)^2 - \sinh(r)^2\cos(\theta).
\end{align*}
The distance $d_{\mathbb{H}}(z_1, \gamma z_1)$ increases monotonically with the angle $\theta \in (0, \pi)$. As the minimal angle is constrained by the $2m$-th order stabilizer to be $\pi/m$, we obtain the inequality:
\begin{align*}
  \cosh(d_{\mathbb{H}}(z_1,\gamma z_1))=1+ \sinh(r)^2\sin^2(\frac{\pi}{2m})>1+ \sinh(r)^2 m^{-2}.
\end{align*}
The restriction of $r>\delta$ implies that
$\sinh(r)^2\sin^2(\frac{\pi}{2m})>\left( m^{-1} \delta\right)^2$. Hence the following inequality holds
\begin{align*}
  \cosh(d_{\mathbb{H}}(z_1,\gamma z_1))>1+ \left(m^{-1} \delta \right)^2,
\end{align*}
which is a contradiction with $d_{\mathbb{H}}(z_1,\gamma z_1)<\frac{\delta}{4Y}$.
We have finished the proof.
\end{proof}

The following theorem provides a precise asymptotic estimate for the Bergman kernel within this bulk region\cite[Theorem 3.3]{HLWY25}.

\begin{theorem} \label{thm:bergman_asymptotic}
Let $k \ge 4$ be even. Let $z=x+iy$ and $w=u+iv$. Under the assumptions of Theorem \ref{ellipticdistance}, there exists
some constant $c_0>0$ satisfying $d_{\Hbb}(z,w)<c_0\delta$, such that  we have a uniform estimate in the region $\mathcal{F}_{\delta}$.
\[
R_k(z, w) = 2 \left( \frac{2i\sqrt{yv}}{z-\bar{w}} \right)^k + O\left(  e^{-\frac{\delta^2}{128Y^2}k} + y e^{-k/(17y^2)} \right).
\]
\end{theorem}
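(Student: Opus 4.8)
The plan is to expand the Bergman kernel as a sum over $\Gamma$, isolate the two terms $\gamma=\pm I$ as the main term, and show that the rest of the sum is negligible by reading off each summand a hyperbolic distance. By Definition~\ref{Bargmannkernel} and \eqref{eq:bergman_Rk_def}, $R_k(z,w)=(yv)^{k/2}\sum_{\gamma\in\Gamma}b_\gamma(z,-\bar w)^k$ with $b_\gamma(z,-\bar w)=\frac{2i}{\gamma z-\bar w}\cdot\frac{1}{cz+d}$. Using $\Im(\gamma z)=y|cz+d|^{-2}$, the identity $|\gamma z-\bar w|^{2}=|\gamma z-w|^{2}+4\,\Im(\gamma z)\,v$, and $\cosh d_\Hbb(p,q)=1+2u(p,q)=2\cosh^{2}\!\big(\tfrac12 d_\Hbb(p,q)\big)-1$, one obtains the clean formula
\[
(yv)^{k/2}\big|b_\gamma(z,-\bar w)\big|^{k}=\Big(\cosh\tfrac12 d_\Hbb(\gamma z,w)\Big)^{-k}\le 1 .
\]
For $\gamma=\pm I$ we have $\gamma z=z$, and since $k$ is even the two corresponding terms add to exactly $2\big(2i\sqrt{yv}/(z-\bar w)\big)^{k}$. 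So it remains to bound $\Sigma:=\sum_{\gamma\in\Gamma\setminus\{\pm I\}}\big(\cosh\tfrac12 d_\Hbb(\gamma z,w)\big)^{-k}$, uniformly for $z\in\Fcal_\delta$ and $w$ with $d_\Hbb(z,w)<c_0\delta$; note $d_\Hbb(z,w)<c_0\delta<1$ forces $v\asymp y$, which I use throughout.

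When $\Im z<2$, Lemma~\ref{ellipticdistance} gives $d_\Hbb(z,\gamma z)>\delta/(4Y)$ for every $\gamma\ne\pm I$; choosing the absolute constant $c_0$ with $c_0\le 1/(8Y)$ (the factor $Y$ is harmless, see below), the triangle inequality yields $d_\Hbb(\gamma z,w)\ge d_\Hbb(z,\gamma z)-c_0\delta\gg\delta/Y$. I would then split $\big(\cosh\tfrac12 d\big)^{-k}=\big(\cosh\tfrac12 d\big)^{-(k-4)}\big(\cosh\tfrac12 d\big)^{-4}$, bound the first factor by its maximum $\ll\exp(-c\,\delta^{2}k/Y^{2})$ using $\cosh t\ge 1+t^{2}/2$, and sum the second: $\sum_{\gamma\in\Gamma}\big(\cosh\tfrac12 d_\Hbb(\gamma z,w)\big)^{-4}=\sum_{\gamma\in\Gamma}(1+u(\gamma z,w))^{-2}$ is a convergent lattice sum, $O(1)$ when $z,w$ have bounded height and in general $\ll Y^{2}$, which is absorbed into the exponential since $\delta^{2}k/Y^{2}\to\infty$. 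This gives $\Sigma\ll e^{-\delta^{2}k/(128Y^{2})}$ after optimizing the constant.

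When $\Im z\ge 2$, write $\Gamma\setminus\{\pm I\}=(\Gamma_\infty\setminus\{\pm I\})\sqcup\{\gamma:c\ne0\}$. For $\gamma=\pm\big(\begin{smallmatrix}1&n\\0&1\end{smallmatrix}\big)$ one has $b_\gamma(z,-\bar w)=2i/((z+n)-\bar w)$, so the parabolic contribution is
\[
2\sum_{n\ne0}\Big(1+\tfrac{(x-u+n)^{2}+(y-v)^{2}}{4yv}\Big)^{-k/2}\ \ll\ y\,e^{-k/(17y^{2})},
\]
which follows from $v\asymp y$, the bound $\log(1+t)\ge t/(1+t)$, and comparison of the resulting series with a geometric one; this is where the factor $y$ and the constant $17$ appear. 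For $c\ne0$ we have $\Im(\gamma z)=y|cz+d|^{-2}\le 1/(c^{2}y)\le\tfrac12$, so $u(\gamma z,w)\gg y\,\Im(\gamma z)^{-1}\gg y^{2}$; hence $d_\Hbb(\gamma z,w)$ exceeds an absolute constant and this part is $\ll e^{-ck}$, again absorbed into the first error term. Combining the two regimes proves the estimate.

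The main obstacle is uniformity as $Y\to\infty$, $\delta\to0$, and as $z$ ranges over all of $\Fcal_\delta$, which reaches arbitrarily high into the cusp. Two points need care: (i) the lattice-point count governing $\sum_{\gamma}(1+u(\gamma z,w))^{-2}$ degenerates near the cusp precisely along the parabolic elements --- for $z=w=iT$ with $T$ large the translates alone give $\asymp T\sqrt R$ points in $\{u(\gamma z,w)\le R\}$ --- so splitting off $\Gamma_\infty$ first, as above, is essential, and the residual growth is only polynomial in $Y$ and so is absorbed into the exponential decay; and (ii) the constants $128$ and $17$, which come from the sharp forms of $\cosh t\ge 1+t^{2}/2$ and $\log(1+t)\ge t/(1+t)$, not from any new idea. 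Finally, the excision of the $\delta$-neighbourhoods of the elliptic points is indispensable: without it an elliptic $\gamma$ could fix a point within distance $o(\delta/Y)$ of $z$, making $d_\Hbb(\gamma z,w)$ arbitrarily small and destroying the decay of $\Sigma$.
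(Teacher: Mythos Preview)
Your argument is correct and is essentially the same as the paper's: isolate $\gamma=\pm I$, then for $\Im z<2$ use Lemma~\ref{ellipticdistance} together with the triangle inequality and the splitting $(\cosh\tfrac12 d)^{-k}=(\cosh\tfrac12 d)^{-(k-4)}(\cosh\tfrac12 d)^{-4}$ to pull out the maximum and sum the rest (the paper writes this as $|\mathcal E_k|\le \Im(z)\max_{\gamma\neq\pm I}(1+u)^{-k/2+2}$, which is the same manoeuvre). The only substantive difference is in the cusp: for $\Im z\ge 2$ the paper simply quotes (3.14)--(3.15) of \cite{HLWY25} to get $O(ye^{-k/(17y^2)})$, whereas you reprove this by hand via the decomposition $\Gamma\setminus\{\pm I\}=(\Gamma_\infty\setminus\{\pm I\})\sqcup\{c\neq 0\}$, which is exactly what underlies that citation and makes your write-up self-contained.
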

\begin{proof}
We start with the estimate in the proof of Theorem 3.3 in \cite{HLWY25}:
\[ R_k(z, w) = 2 \left( \frac{2i\sqrt{yv}}{z-\bar{w}} \right)^k + \mathcal{E}_k(z, w), \]
where the error term can be estimated as
\begin{align*}
   |\mathcal{E}_k(z, w)| &\le \sum_{\gamma \in \SL_2(\Zbb), \gamma \neq \pm I} \left(1 + u(w, \gamma z)\right)^{-k/2}\\
   &\le \Im(z)\max_{\gamma\in \SL(2,\mathbb{Z})\atop \gamma\neq\pm I}
   (1+u(w,\gamma z))^{-\frac{k}{2}+2}.
\end{align*}
By (3.14) and (3.15) in \cite{HLWY25}, when $\Im z >2$,  we have
\begin{align*}
  R_k(z, w)=O(ye^{-k/(17y^2)}).
\end{align*}

Lemma \ref{ellipticdistance} shows that the action of $\Gamma$ on the domain $\{z\in \mathbb{H}\big||\Re z|\leq
\frac{1}{2}, Y^{-1}<\Im z\leq 2\}\cap \Fcal_\delta$ satisfies
\begin{align*}
  d_{\Hbb}(z,\gamma z)\geq \frac{\delta}{4Y}.
\end{align*}
Since $c_0$ is sufficiently small, we have
\begin{align*}
  d_{\Hbb}(w,\gamma z)\geq d_{\Hbb}(z,\gamma z)-d_{\Hbb}(w,z)\geq \frac{\delta}{8Y}.
\end{align*}
By the relation $\cosh d_{\Hbb}(w,\gamma z)=2u(w,\gamma z)+1$, we obtain the inequality
\begin{align*}
  u(w,\gamma z)=\frac{\cosh d_{\Hbb}(w,\gamma z)-1}{2}>\frac{\delta^2}{128 Y^2}.
\end{align*}
Then we conclude that in domain $\{z\in \mathbb{H}\big||\Re z|\leq
\frac{1}{2}, Y^{-1}<\Im z\leq 2\}\cap \Fcal_\delta$,
\begin{align*}
   |\mathcal{E}_k(z, w)|  &\le \Im(z)\max_{\gamma\in \SL(2,\mathbb{Z})\atop \gamma\neq\pm I}(1+\frac{\delta^2}{128Y^2})^{-\frac{k}{2}+2}\\
   &\ll  e^{-\frac{\delta^2}{128 Y^2}k}.
\end{align*}
\end{proof}

Near the elliptic points, additional terms arising from the non-trivial
stabilizer groups become significant. We state the following as a remark.

\begin{remark}[Refinement near the elliptic point $z_0$]
Since the hyperbolic metric is invariant under the action of $\Gamma$, it follows that the set of elliptic points is discrete. Consequently, the hyperbolic distance between any two distinct elliptic points has a positive infimum. It follows from the assumptions and proof of Theorem \ref{thm:bergman_asymptotic} that for any elliptic point, it is possible to choose a  suitable $\delta$-neighborhood wherein the main term of the Bergman kernel arises solely from the stabilizer subgroup of that point.
For $z$ in a neighborhood of the elliptic point $z_0$ (e.g., $|z-z_0| \le \delta$) and $|z-w| \le c_0 \delta$, the asymptotic formula for the Bergman kernel includes an additional main term:
\begin{align}\label{neari}
R_k(z, w) = (\Im z\Im w)^{\frac{k}{2}}\sum_{\gamma\in \stab(z_0)}b_\gamma(z, -\overline{w})^k + O\left( e^{-\frac{\delta^2}{128Y^2}k} \right),
\end{align}
where we shall note that $\stab(z_0)$ is a finite subgroup of $\SL(2,\mathbb{Z})$.
\end{remark}

The Bergman kernel provides sufficient information to study
the rQUE problem in an average sense. Let $z=w=x+iy$,
Theorem \ref{thm:bergman_asymptotic} shows that for
all $0 < \delta < 1$, $z \in \Fcal_\delta$, and $d_{\Hbb}(w,z) \le c_0 \delta$, we have
\begin{align}\label{faraway}
R_k(z, z) = 2 + O\left( e^{-\frac{\delta^2}{128Y^2}k} + y e^{-k/(17y^2)} \right).
\end{align}
By the remark of Theorem \ref{thm:bergman_asymptotic}, for any
small $\delta$-neighborhood $\{z|d_{\Hbb}(z,z_0)<\delta\}$ of an
elliptic point $z_0$ in $P(Y)$, we have
\begin{align}\label{ellipticnearasymptotic}
R_k(z, z) = 2+ \sum_{\gamma\in \stab(z_0)\setminus\{\pm I\}}(y b_\gamma(z, -\overline{z}))^k + O\left( e^{-\frac{\delta^2}{128Y^2}k} \right).
\end{align}

The following equation holds
\begin{align*}
  |y b_\gamma(z, -\overline{z})|=\frac{2y}{|\gamma z-\overline{z}|}
  =\frac{2y}{(|\gamma z-z|^2+4y^2)^{\frac{1}{2}}}=(1+u(z,\gamma z))^{-\frac{1}{2}}\leq(\frac{1}{2}\cosh d_{\Hbb}(z,\gamma z))^{-\frac{1}{2}}.
\end{align*}
Let $\delta=\frac{\sqrt{128A} Y(\log k)^{\frac{1}{2}}}{ k^{\frac{1}{2}}}$.
By Lemma \ref{ellipticdistance}, we get an upper bound of the
non-trivial action,
\begin{align}\label{ellipticnearerrorterm}
  |y b_\gamma(z, -\overline{z})|^{k}\ll (1+d_{\Hbb}^2(z,\gamma z))^{-\frac{k}{2}}\ll e^{-A(\log k)}.
\end{align}
The choice of $\delta$ also implies the error term in \eqref{ellipticnearasymptotic} is bounded by
\begin{align}\label{esimate1}
 e^{-\frac{\delta^2}{128Y^2}k}\ll e^{-A(\log k)}
\end{align}
When $d_{\Hbb}(z,z_0)< \delta$, we use the trivial bound
\begin{align}\label{ellipticnearnearerrorterm}
  |y b_\gamma(z, -\overline{z})|^{k}\leq 1.
\end{align}

\section{Equidistribution results via Bergman Kernel}
 Let $\psi$ be a compactly supported smooth function supported
 on $\mathbb{R}^{+}$. Let $z=x+iy$ and $|x|\leq 1/2$. We investigate
 \begin{align*}
   \int_{\Re(z)=x\atop y>0}R(z,z)\psi(y)\frac{dy}{y}.
 \end{align*}

 Recall the notation \eqref{ellipticpoint}. Take $\delta=\frac{\sqrt{128A}Y(\log k)^{\frac{1}{2}}}{ k^{\frac{1}{2}}}$.  If there is no elliptic point near the geodesic line $\{\Re(z)=x, y>0\}$, i.e.,
 \begin{align*}
   \big(\bigcup_{j}\eta_{\delta, j}\big)\;\bigcap \;\{|\Re(z)|\leq \frac{1}{2}, \Im(z)>0\}=\varnothing.
 \end{align*}
 Noting that $\supp(\psi)\subset(Y^{-1}, \frac{k^{\frac{1}{2}}}{\sqrt{17A}(\log k)^{\frac{1}{2}}})$,  we directly  deduce by \eqref{faraway} and \eqref{esimate1},
 \begin{align}\label{farawayavr}
   \int_{\Re(z)=x\atop y>0}R(z,z)\psi(y)\frac{dy}{y}= 2\int_{0}^{\infty}\psi(y)\frac{dy}{y}+ O\left( k^{-A}\right).
 \end{align}
  By \eqref{ellipticnearasymptotic}, \eqref{ellipticnearerrorterm}
   and \eqref{ellipticnearnearerrorterm}, if there is some elliptic point
   near the geodesic line (we can control the number of elliptic points
   by Lemma 2.10 in \cite{Iwa95}), we have
 \begin{align}\label{nearellipticavr}
   \int_{\Re(z)=x\atop y>0}R(z,z)\psi(y)\frac{dy}{y}=2\int_{0}^{\infty}\psi(y)\frac{dy}{y}+O\left( Y^2 k^{-\frac{1}{2}}(\log k)^{\frac{1}{2}} \right).
 \end{align}
where the error term is by an observation that a hyperbolic circle in the upper half-plane with hyperbolic radius $r$ corresponds to an Euclidean circle with Euclidean radius $\sinh(r)$ in the same position.

 The proof for the horizontal case is similar to that for the vertical geodesic case.
 So we omit the details.
 We can show that for any $y\gg k^{-1/4}(\log k)^{\frac{1}{4}}$ and
 $\phi$ an integrable function on $(-1/2,1/2)$,
 the following asymptotic formula holds uniformly:
 \begin{align}
   \int_{\Im(z)=y \atop |x|<\frac{1}{2}}R(z,z)\psi(x)dx=2\int_{-\frac{1}{2}}^{\frac{1}{2}} \psi(x)dx+O\left( Y^2 k^{-\frac{1}{2}}(\log k)^{\frac{1}{2}} \right).
 \end{align}

\bibliographystyle{amsplain}

\end{document}